\documentclass[12pt,epsfig]{amsart}
\usepackage{latexsym}
\usepackage{amsmath,amscd,amssymb}
\usepackage{graphics}
\textwidth=15cm
\hoffset=-1.6cm
 2

\usepackage[utf8]{inputenc}
\usepackage[T1]{fontenc}

\include{plaquettemacro}
\usepackage{hyperref}
\newtheorem{theorem}{Theorem}[section]
\newtheorem{prop}{Proposition}[section]

\newtheorem{definition}[theorem]{Definition}
\newtheorem{example}[theorem]{Example}

\numberwithin{equation}{section}

\newcommand{\p}{{\partial}}
\title[Interpolating singularity and curve by a generalised maximal surface]{Existence of maximal surface containing given curve and special singularity}
\author{Rukmini Dey}
\address{International Centre for Theoretical sciences,
Bengaluru- 560 089, India.}
\email{rukmini@icts.res.in}
\author{Pradip Kumar}{\thanks{Second and Third authors would like to thank ICTS-TIFR for its hospitality while this work was completed}}
\address{Department of Mathematics, Shiv Nadar University, Dadri 201314, Uttarpradesh, India.}
\email{pmishra.math@gmail.com}
\author{Rahul Kumar Singh}
\address{Department of Mathematics,
  Harish-Chandra Research Institute, HBNI, Chhatnag Road, Jhunsi, Allahabad-211019, Uttarpradesh\\ India}
\email{rahulkumar@hri.res.in; rhlsngh498@gmail.com}
\subjclass[2010]{53A35, 53B30}
\keywords{Maximal surfaces, Bjorling formula, Singularities}
\begin{document}
\maketitle
\begin{abstract}
We give a different formulation for describing maximal surfaces in Lorentz-Minkowski space, $\mathbb{L}^3$, using the identification of $\mathbb L^3$ with $\mathbb C\times \mathbb R$. Further we give a different proof for the singular Bj\"orling problem for the case of closed real analytic null curve. As an application, we show the existence of maximal surface which contains a given curve and has a special singularity.
\end{abstract}
\section{Introduction}
Generalised maximal surfaces in Lorentz-Minkowski space $\mathbb L^3$ are spacelike immersions with zero mean curvature and singularities. In this article, we ask: does there exists a generalised maximal surface containing given curve and having a special singularity.  We start with the following example.

Let $\alpha(\theta)= (-\frac{3}{4} \cos\theta, -\frac{3}{4} \sin \theta, \ln \frac{1}{2})$ be a spacelike closed real analytic curve. This curve lies on elliptic catenoid, a maximal surface, given by  map
$F(x, y)=\left(\frac{x(x^2+y^2-1)}{2(x^2+y^2)}, \frac{y(x^2+y^2-1)}{2(x^2+y^2)}, \ln\sqrt{x^2+y^2}\right)$. We see that
\begin{enumerate}
\item the map $F$ is defined for all $z =x+iy \neq 0$ and has conelike singularity on $|z|=1$.
\item there is a positive real $r_0$, namely $r_0= \frac{1}{2}$ such that $F(|z|=r_0)= \gamma(\frac{1}{2}e^{i\theta}):=\alpha(\theta).$
%\item the curve $F(r_0e^{i\theta})$ is space like.
\end{enumerate}
On the other hand if we take $\beta(\theta)= (e^{i\theta}, 1)$, which is a spacelike closed curve, we will see  (in section 4) that there does not exist any maximal surface $F$ (parametrised by single chart $F$ defined for all $z\neq 0$) and any $r_0\neq 1$ such that $F(r_0e^{i\theta})=\tilde{\beta}(r_0e^{i\theta}):= \beta(\theta)$ and has singularity (not necessarily cone like) at $|z|=1$.

In this article, we will see that if given curve $\gamma$ satisfies some conditions then there exists a generalised maximal surface which has property (1) and (2) as above.

This follows from the solution of  the Bj\"orling problem for maximal surface.  Al\'ias, Chaves, and Mira in \cite{mira_2} solved the Bj\"orling problem for maximal surfaces.  Kim and Yang in \cite{kim} introduced the singular Bj\"orling problem and proved it for the case of real analytic null curve  defined on an open interval. Immediate extension of the singular Bj\"orling problem  and solution for the case of closed curve was discussed by the same authors in \cite{kim}.  We have revisited this problem for the case of closed analytic curve and given a different proof for this.  We believe the technique used in our proof helps to know more about the generalised maximal surface. In  particular it helps  to solve  the problem discussed in the beginning of the introduction.

  When one curve is constant and other curve is nonconstant real analytic and closed, we can ask: can we interpolate both curves by some generalised maximal surface such that the point (corresponding to the constant curve) is a singularity. We answer this question under certain conditions on nonconstant real analytic curve. In this article we did not solve interpolation problem in general, namely the question of existence of a maximal surface interpolating two real analytic curves. This we save for future work.

Article is arranged as follows: In section 2, we discussed the maximal surface and its properties in a modified way.  In section 3, we revisit the singular Bj\"orling problem discussed by Kim and Yang \cite{kim} in a different way. In section 4, we discussed the particular case of the interpolation problem.\\
Our article is self contained and the content of the article is  motivated by (in particular for maximal surface) \cite{neill},\cite{lopez},\cite{fujimori},\cite{kim},\cite{romero} and for minimal surface  by \cite{iwaniec}.
\section{maximal surface}
Vector space $\mathbb R^3$ with the metric $dx^2+dy^2-dt^2$, denoted by $\mathbb L ^3$,  is known as Lorentz-Minkowski space. We identify the vector space structure of $\mathbb L^3$ with $\mathbb C\times \mathbb R$, by $(x,y,t)\to (x+iy, t)$ then the metric is $(dx+idy)(dx-idy)- dt^2$.

\begin{definition}
Let $\Omega\subset\mathbb C$ be a domain and  $F= (u,v,w): \Omega\to \mathbb L^3$ be a nonconstant, smooth harmonic map such that  the coordinate functions $u, v, w$ satisfy the conformality relations (with $z=x+iy$),
\begin{eqnarray}\label{isothermal}
&&u_x^2+v_x^2-w_x^2= u_y^2+v_y^2-w_y^2\\
&&u_xu_y+v_xv_y-w_xw_y=0\nonumber
\end{eqnarray}
and on $\Omega$,  $|\frac{\p u}{\p z}|^2+|\frac{\p v}{\p z}|^2-|\frac{\p w}{\p z}|^2$ does not vanish identically.
$F$ is said to be a Generalised maximal surface.
\end{definition}
Let $ F=(h:=u+iv,w) $, where $ h $ is the complex coordinate of $ F $, the conformality relations \eqref{isothermal} is equivalent to
\begin{equation*}
h_z\overline{{h}_{\bar{z}}}-w_z^2=0.
\end{equation*}
On $\Omega$, nonvanishing of  $|\frac{\p u}{\p z}|^2+|\frac{\p v}{\p z}|^2-|\frac{\p w}{\p z}|^2$ is equivalent to  $|h_z|$ is not identically equal to $|h_{\bar{z}}|$.
In view of the above complex representation, we have an equivalent definition of the generalised maximal surface.
\begin{definition}\label{Def:GeneralisedMaxSurface}  Let $ F=(h,w):\Omega\to \mathbb C\times \mathbb R $ be a smooth map such that  $h_{z\overline{z}}=0$ and $w_{z\overline{z}}=0$ with
$h_z\overline{{h}_{\bar{z}}}-w_z^2=0$
and  ${|h_z|}$ is not identically equal to ${|h_{\bar{z}}|}$. Generalised maximal surface is the equivalence class of map $F$, where equivalence relation is change of conformal parameter.
\end{definition}
\begin{example}[Elliptic catenoid]\label{elliptic_catenoid_1}\normalfont Let $\Omega= \mathbb C- \{0\}$ and $h(z)= \frac{1}{2}\left(z-\frac{1}{\bar{z}}\right)$, $w(z)= \frac{1}{2}\log(z\overline{z})$. Then we define $F:\mathbb C-\{0\}\to \mathbb C\times \mathbb R$, $F(z)= (h(z), w(z))$.
We have $h_z\overline{{h}_{\bar{z}}}-w_z^2=0$ and $h_{z\bar{z}}= w_{z\bar{z}}=0$ for all $z\in \Omega$.
Here $ |h_z|$ is not identically equal to $|h_{\bar{z}}| $. Only on $ |z|=1 $, $ |h_z|=|h_{\bar{z}}|. $
\end{example}
\begin{example}\normalfont
If we take $ h:\mathbb{C}\rightarrow\mathbb{C} $ defined by $ h(z)=\sin z + \sin{\bar{z}}+i0 $ and  $w(z)= \sin z+\sin{\bar{z}}$, then we have $h_z\overline{{h}_{\bar{z}}}-w_z^2=0$ and $h_{z\bar{z}}= w_{z\bar{z}}=0$ for all $z\in \mathbb{C}$, but $ |h_z|$ is identically equal to $|h_{\bar{z}}| $ on whole of $ \mathbb{C} $. Therefore it is not a generalised maximal surface.
\end{example}
Since $ F $ is a generalised maximal surface in isothermal parameters, we have $\langle F_x,F_x\rangle = \langle F_y,F_y\rangle=\eta, \langle F_x,F_y\rangle=0 $, we have
\begin{equation}
ds^2=\eta(z)(dx^2+dy^2)=\eta(z)|dz|^2,\;\text{ where}
\end{equation}
\begin{equation}
\begin{split}
\eta(z) & = \langle F_x,F_x\rangle \\
 & = \langle (h_z + h_{\bar{z}}, w_z + w_{\bar{z}}),(h_z + h_{\bar{z}}, w_z +
 w_{\bar{z}})\rangle \\
 & = (h_z + h_{\bar{z}})\overline{(h_z + h_{\bar{z}})}-(w_z + w_{\bar{z}})^2
 \end{split}
\end{equation}
Now using conformality relation $ h_z\overline{{h}_{\bar{z}}}-w_z^2=0$, we obtain \begin{center}
$\eta(z)=(|h_z|-|h_{\bar{z}}|)^2$.
\end{center}
A point of $ \Omega\subseteq\mathbb{C} $ on which the equation $ |h_z|=|h_{\bar{z}}| $ holds is called a singular point of $ (F,\Omega) $ and set of all \textit{singular points} is called the \textit{singular set} of the maximal surface $(F,\Omega)$.
Based on image of singularity set,  authors in \cite{kim}, \cite{fujimori}, \cite{kobayashi}, \cite{umehara} have discussed various kind of singularities, such as shrinking, curvilinear singularity, cuspidal edges, swallowtails etc. Fern\'{a}ndez, L\'{o}pez, and Souam in \cite{lopez} discussed two type of isolated singularity; branch and special singularity. We also use the name special singularity for the singularity as defined below.
\begin{definition}[Special singularity]
A point $ p $ in $ \mathbb{L}^3 $ is such that $ F(\{|z|=r\})=p $ for some $ r>0 $, then we say that at $ p $  the generalised maximal surface $(F,\Omega)$ has special singularity, if $ |z|=r $ is a subset of the singular set (set of all singular points) of $ (F ,\Omega) $.
\end{definition}
If $(F, \Omega)$  has a special singularity at a point $p$ for $|z|=r$, we often refer to it as $ p $ or $ |z|=r $.

Points where $ |h_z|\neq|h_{\bar{z}}| $ holds are called  regular point of $ (F,\Omega) $ in the sense that at those points of $\Omega$, $F$ will be immersion.
We have following easy observation that  if $ F $ is not an immersion, then in particular $ u_xv_y-u_yv_x=0$. In turn $ u_xv_y-u_yv_x= |h_z|^2-|h_{\bar{z}}|^2 $. Thus $ |h_z|=|h_{\bar{z}}| $.

Conversely, suppose $ |h_z|=|h_{\bar{z}}| $, as $ F=(h,w) $ is a generalized  maximal surface then $ |h_z|=|h_{\bar{z}}| $ corresponds to singular set of the surface.
%Now take a non-branch point of generalised maximal surface i.e. a regular point and assume  $ |h_z|=|h_{\bar{z}}| $ holds at that point. Then we will see a contradiction there.]]]]]]]]]]]]]]]]]]]
Indeed, since we have $h_z\overline{{h}_{\bar{z}}}-w_z^2=0$, this imply $ |w_z|^2=|h_z|^2=|h_{\bar{z}}|^2 $. This also gives
\begin{equation}
2(u_xv_y-u_yv_x)=(u_x^2+v_x^2-w_x^2)+(u_y^2+v_y^2-w_y^2)
\end{equation}
As we have $F$ maximal, so  by definition $ F $ is spacelike. Therefore, the vectors $F_x=(u_x, v_x, w_x)$ and $F_y=(u_y,v_y,w_y)$ are spacelike vectors and hence
 \begin{center}
$|F_x|^2=u_x^2+v_x^2-w_x^2\geq0 $
\end{center}
\begin{center}
$|F_y|^2=u_y^2+v_y^2-w_y^2\geq0 .$
\end{center}
therefore we get $ |F_x|^2+|F_y|^2= 0$. This imply $ F_x=F_y=0 $. Thus $ F $ is not an immersion.
Therefore we see that  $F= (h,w):\Omega\to \mathbb L^3$ is a generalised maximal surface.  $F$ is immersion at $p\in \Omega$ if and only if at $p$, $|h_z|\neq |h_{\bar{z}}|.$

With this representation of maximal surface, following \cite{iwaniec}, we have the following:
\begin{prop}\label{complexrepresentaion}
Let $h:\Omega\to \mathbb C$ be the complex coordinate of the isothermal representation of a generalized maximal surface $F=(h,w):\Omega\to \mathbb C\times\mathbb R\simeq \mathbb L^3$. Then on $\Omega\subset \mathbb C$, we can write
    $$w(z)= 2 Re\int_{z_0}^z\sqrt{h_z\overline{{h}_{\bar{z}}}}dz+ w(z_0),$$ where the line integral is along any smooth curve starting from $z_0$ and ending at $z$.
\end{prop}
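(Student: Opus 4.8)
The plan is to read the claimed identity as a statement about the real harmonic function $w$ and to exhibit $w_z$ itself as the relevant branch of the square root. First I would record the two regularity facts coming from Definition \ref{Def:GeneralisedMaxSurface}: since $w$ is real-valued we have $w_{\bar z}=\overline{w_z}$, and since $w_{z\bar z}=0$ the function $w_z$ is holomorphic on $\Omega$. Likewise $h_{z\bar z}=0$ forces $h_z$ to be holomorphic and $h_{\bar z}$ to be anti-holomorphic, so $\overline{h_{\bar z}}$ is holomorphic and therefore $h_z\overline{h_{\bar z}}$ is a holomorphic function on $\Omega$.

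Next I would invoke the conformality relation $h_z\overline{h_{\bar z}}-w_z^2=0$, which says precisely that $w_z$ is a holomorphic square root of the holomorphic function $h_z\overline{h_{\bar z}}$ on all of $\Omega$; this is what the symbol $\sqrt{h_z\overline{h_{\bar z}}}$ is to denote (the only other global choice being $-w_z$, which merely flips the sign inside the real part, so one simply fixes the branch). In particular there is no monodromy obstruction to the square root, because the global holomorphic square root $w_z$ is handed to us by the surface itself.

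Then I would compute the differential of $w$. Writing $dw = w_z\,dz + w_{\bar z}\,d\bar z$ and using $w_{\bar z}=\overline{w_z}$ gives
\[
dw = w_z\,dz + \overline{w_z\,dz} = 2\,\mathrm{Re}\bigl(w_z\,dz\bigr) = 2\,\mathrm{Re}\bigl(\sqrt{h_z\overline{h_{\bar z}}}\;dz\bigr).
\]
Integrating this exact real $1$-form along any smooth curve $\gamma$ from $z_0$ to $z$ and applying the fundamental theorem of calculus for line integrals yields $w(z)-w(z_0)=2\,\mathrm{Re}\int_{z_0}^{z}\sqrt{h_z\overline{h_{\bar z}}}\,dz$, which is the assertion.

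The only delicate point — and the one I would be careful to spell out — is the interplay between path-independence and the topology of $\Omega$: the holomorphic $1$-form $w_z\,dz$ need not admit a single-valued primitive when $\Omega$ is not simply connected (its periods are purely imaginary, but need not vanish), yet the formula remains correct on an arbitrary domain because it is $dw$, an exact real $1$-form, that is being integrated, and taking $2\,\mathrm{Re}$ annihilates exactly the multivalued part. I would state this explicitly so that no passage to a simply connected subdomain is needed, and note that the elliptic catenoid of Example \ref{elliptic_catenoid_1}, with $\Omega=\mathbb{C}\setminus\{0\}$, is a genuine instance of this non-simply-connected situation.
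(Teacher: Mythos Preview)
Your proof is correct and follows essentially the same route as the paper: both arguments identify $\sqrt{h_z\overline{h_{\bar z}}}$ with $w_z$ via the conformality relation and then use $dw=w_z\,dz+\overline{w_z}\,d\bar z=2\,\mathrm{Re}(w_z\,dz)$ to obtain the formula, with path-independence coming from the exactness of $dw$. Your version is more explicit about \emph{why} a global branch of the square root exists (namely, $w_z$ is handed to us), a point the paper merely asserts, but the underlying argument is the same.
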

\begin{proof}
The function $h_z\overline{{h}_{\bar{z}}}$ admits a continuous branch of square root in $\Omega$.  Let $\Gamma$ be a closed curve in $\Omega$.  Consider
    $$2Re\int_\Gamma \sqrt{h_z\overline{{h}_{\bar{z}}}}dz= \int_\Gamma \sqrt{h_z\overline{{h}_{\bar{z}}}}dz+\overline{\int_\Gamma \sqrt{h_z\overline{{h}_{\bar{z}}}}dz}=\int_\Gamma \omega_z dz+ \int_\Gamma\overline{\omega_z}\overline{dz}=\int_\Gamma dw =0.$$
Therefore we have  for every closed curve $\Gamma\subset \Omega$,
   $$Re\int_\Gamma\sqrt{h_z\overline{{h}_{\bar{z}}}} dz=0.$$
This allows us to take $w(z)-w(z_0)= 2Re\int_{z_0}^z\sqrt{h_z\overline{{h}_{\bar{z}}}}dz.$
    This gives
    $$w(z)= 2Re\int_{z_0}^z\sqrt{h_z\overline{{h}_{\bar{z}}}}dz+w(z_0).$$
%we can ignore $w(z_0)$ because it just corresponds to the translation of surface over the domain.
\end{proof}
The complex coordinate representation (as in definition \eqref{Def:GeneralisedMaxSurface} and in proposition \eqref{complexrepresentaion}) of the generalised maximal surface helps us to construct various examples of maximal surfaces.  In particular if we take any complex harmonic map $h: \Omega\to\mathbb C$ such that $|h_z|$ is not identically same as $|h_{\bar{z}}|$, then the map defined $F: \Omega\to \mathbb L^3$, defined by $F(z)= \left(h(z),2Re\int_{z_0}^z\sqrt{h_z\overline{{h}_{\bar{z}}}}dz\right)$ is a generalised maximal surface.
\begin{example}\normalfont
If we take $ h:\mathbb{C}\rightarrow\mathbb{C} $ defined by $ h(z)=e^{z}+ \bar{z} $. Then $ h_z=e^z, h_{\bar{z}}=1 $ and hence $ |h_z|=|h_{\bar{z}}|=1 $ on imaginary axis. Here $|h_z|$ is not identically equal to $|h_{\bar{z}}|$, by proposition \eqref{complexrepresentaion}, we can determine the third real coordinate $ w$ to make $(h,w)$ a maximal surface.
$$w(z)= 2 Re\int\sqrt{h_z\overline{{h}_{\bar{z}}}}dz=2(e^\frac{z}{2}+e^\frac{\bar{z}}{2}). $$  The map $ F:\mathbb{C}\rightarrow\mathbb{L}^3$ given by $ F(z)=(h(z),w(z)) $ satisfies $ h_z\overline{{h}_{\bar{z}}}-w_z^2=0 $ (conformality relations) and $ h_{z\bar{z}}=0 $, $ w_{z\bar{z}}=0 $(harmonicity) and hence defines a generalized maximal surface.
\end{example}
\begin{example}\normalfont
If we take $h(z)= \frac{1}{2}\left(z-\frac{1}{\bar{z}}\right)$, by proposition \eqref{complexrepresentaion}, we get $w(z)= \frac{1}{2}\log(z\bar{z})$. Then $ F(z)=(h(z),w(z)) $ defines what is known as a elliptic catenoid which is a generalised maximal surface with singular set the unit circle $ \{|z|=1\} $.
\end{example}

%\begin{example}[Lorentzian Helicoid]\normalfont
%The Lorentzian helicoid can be expressed by $ h(z)=\frac{i}{2}\left(z+\frac{1}{\bar{z}}\right) $ and $ w(z)=-arg(z)$; $ z $ is a locally defined conformal parameter. We can obtain a global parametrization by replacing $ z\in\mathbb{C}-\{0\} $ by $ e^z, z\in\mathbb{C} $.
%The singularity set is $ \{z\in\mathbb{C}|Re(z)=0\} $. Lorentzian Helicoid is  conjugate to Elliptic Catenoid.
%\end{example}

Normal vector at a regular point of a generalised maximal surface can be given by a map $N: \Omega\to \mathbb H^2:=\{(x,y,t)\in\mathbb L^3: x^2+y^2-t^2=-1\}$,
    \begin{align}\label{normalh}
    N(z)= \frac{F_x\times F_y}{|F_x\times F_y|}=\left(\frac{2\sqrt{h_zh_{\bar{z}}}}{|h_{\bar{z}}|-|h_z|}, \frac{|h_{\bar{z}}|+|h_z|}{|h_{\bar{z}}|-|h_z|}\right).
    \end{align}

For a generalised maximal surface $(F,\Omega)$, $\Omega$ has three parts $\mathcal{A}:=\{z:|h_{\bar{z}}|<|h_z|\}$,  $\mathcal{B}:=\{z:|h_{\bar{z}}|=|h_z|\}$ and $\mathcal{C}:=\{z:|h_{\bar{z}}|>|h_z|\}$.
As we defined earlier,  $\mathcal{B}$ denotes the singular set  of $(F,\Omega)$. The Gauss map at regular points (that is on $\mathcal{A}$ and on $\mathcal{C})$ is obtained by stereographic projection of $N$ as in \eqref{normalh} from $\mathbb H^2$ to $\mathbb C$. It is   given by
\begin{enumerate}\label{gaussmap}
\item $\nu(z)= \sqrt{\dfrac{h_z}{\overline{h_{\bar{z}}}}}$ on $ \mathcal{A} $
\item $\nu(z)= -\sqrt{\dfrac{h_{\bar{z}}}{\overline{h_z}}}$ on $ \mathcal{C}. $ \end{enumerate}
%We denote subset $ \mathcal{B'} $ of $ \mathcal{B} $ ; $\mathcal{B'}=\{z\in \mathcal{B}:|h_z|= |h_{\bar{z}}|\neq 0\}$.

\section{singular Bj\"orling problem}
Let \begin{align} \label{singulardata}
 \gamma(e^{i\theta})=\left((\gamma_1+i\gamma_2)(e^{i\theta}),\gamma_3(e^{i\theta})\right)
\end{align}
\begin{align*}
L(e^{i\theta})=\left((L_1+iL_2)(e^{i\theta}),L_3(e^{i\theta})\right)
\end{align*}
be such that $\langle \gamma',L\rangle=0 $, where $\gamma$ is a null real analytic closed curve  and $L$ is a null real  analytic vector field and that atleast one of $\gamma'$ and $L$ is not identically zero, $\gamma $ and $L$ both are defined over $S^1$. The above data is known as singular Bj\"orling data for closed curve. Kim and Yang in \cite{kim} studied the singular Bj\"orling problem in detail. In this section we will discuss the same problem for the closed curve from a different point of view.  The singular Bj\"orling problem asks for the  existence of  a generalised maximal surface $$ F=(h,w):A(r,R)\rightarrow \mathbb{L}^3 $$ such that $ F(e^{i\theta})=\gamma(e^{i\theta})~~~\text{and}~~~ \left.\dfrac{\partial F}{\partial\rho}\right\vert_{e^{i\theta}}=(h_{\rho}(e^{i\theta}),w_{\rho}(e^{i\theta}))=L(e^{i\theta})$  with singular set  atleast $\{|z|=1\}$.

For the existence of maximal surface, having prescribed data as above, we will be looking for complex harmonic maps $ h $ and $ w $ on some annulus $ A(r,R) $, $r<1<R$  such that they satisfy
\begin{enumerate}
 \item $h_z\overline{h_{\bar{z}}}-w_z^2\equiv 0 $
 \item $|h_z|=|h_{\bar{z}}|~~~\text{on}~~~ z=e^{i\theta}$
 \item $ |h_z|-|h_{\bar{z}}|$ is not identically zero on $A(r,R).$
 \end{enumerate}

We have the following relation between first order partial differentials in system $ (z,\bar{z}) $ to the first order partial differential in system $ (\rho,\theta) $; where $ z=\rho e^{i\theta} $:
 \begin{equation}\label{hzandhzbar}
 h_{z}=\dfrac{1}{2}\left(h_{\rho}-\dfrac{i}{\rho}h_{\theta}\right)e^{{-i}{\theta}}~~~\text{and}~~~ h_{\bar{z}}=\dfrac{1}{2}\left(h_{\rho}+\dfrac{i}{\rho}h_{\theta}\right)e^{{i}{\theta}}.
\end{equation}
Here we have given $ (h_{\rho}, w_{\rho})=(L_1+iL_2, L_3) $ and $ (h_{\theta},w_{\theta})=({\gamma_1}'+i{\gamma_2}',{\gamma_3}') $ on the unit circle. On $ \{|z|=1\}$, we define the maps $g_1$ and $g_2$ as
\begin{eqnarray}\label{expressionfor_g_whengammaorLvanish}
 &&g_1(e^{i\theta})=\sqrt{\dfrac{L_1+iL_2}{L_1-iL_2}};\; \text{ if } \gamma' \text{ vanishes identically.}\\
&&  g_2(e^{i\theta})=\sqrt{\dfrac{{\gamma_1}'+i{\gamma_2}'}{{\gamma_1}'-i{\gamma_2}'}};\; \text { if }  L \text{ vanishes identically.}
\end{eqnarray}
Since $\gamma'$ and $L$ are dependent (being null vector field and perpendicular), if both $\gamma'$ and $L$ do not vanish identically, we have $g_1(e^{i\theta})= g_2(e^{i\theta})$. Therefore we get a well defined map  $g$ on $ S^1 $ given by $g_1$ and $g_2$ as above.

If there exists a generalised maximal surface $(h,w) $ for the given Bj\"orling data, then analytic extension of $g$ agrees with $\nu=\sqrt{\dfrac{h_z}{\overline{h_{\bar{z}}}}}$ on $\mathcal{A}$ (that is at those points of the domain where $|h_{\bar{z}}|<|h_z|$).  Similarly there is a real analytic function on $S^1$ whose analytic extension matches with $\nu$ on $\mathcal{C}$.

We have the following existence theorem.
\begin{theorem}\label{singularBjorlingProblemThm}
Given a real analytic null closed curve $\gamma:S^1\to \mathbb L^3$ and a null vector field  $L:S^1 \to \mathbb L^3$ such that $ \langle{\gamma'},L\rangle=0 $; atleast one of $\gamma'$ and $L$ do not vanish identically.  If $|g(z)|$ ($ g(z)$ is analytic extension of $ g(e^{i\theta}))$ is not identically equal to $1$, then there exists a unique generalised maximal surface $ F:=(h,w) $ defined on some annulus $ A(r,R):=\{z:0<r<|z|<R\}; r<1<R , $ such that
\begin{enumerate}
\item $ F(e^{i\theta})=(h(e^{i\theta}),w(e^{i\theta}))=\gamma(e^{i\theta}). $
\item $ \left.\dfrac{\partial F}{\partial\rho}\right\vert_{e^{i\theta}}=(h_{\rho}(e^{i\theta}),w_{\rho}(e^{i\theta}))=L(e^{i\theta}).$
\end{enumerate}
 with singular set  atleast $\{|z|=1\}$.
\end{theorem}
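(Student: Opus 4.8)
The plan is to reconstruct the surface from its prescribed $1$-jet along $S^1$. First I would extend $\gamma$ and $L$ to real-analytic maps on a neighbourhood of $S^1$ in $\mathbb C$ and, following \eqref{hzandhzbar}, prescribe on $\{|z|=1\}$ the functions
$$h_z=\tfrac12(h_\rho-ih_\theta)e^{-i\theta},\qquad h_{\bar z}=\tfrac12(h_\rho+ih_\theta)e^{i\theta},\qquad w_z=\tfrac12(w_\rho-iw_\theta)e^{-i\theta},$$
with $(h_\rho,w_\rho)=(L_1+iL_2,L_3)$ and $(h_\theta,w_\theta)=(\gamma_1'+i\gamma_2',\gamma_3')$. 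Each of these is real-analytic on $S^1$; expanding it in a Fourier series in $z=e^{i\theta}$ (respectively in $\bar z$ for $h_{\bar z}$) and using the exponential decay of Fourier coefficients of a real-analytic function, I get a holomorphic extension of $h_z$ and $w_z$ and an antiholomorphic extension of $h_{\bar z}$ to a common annulus $A(r,R)$ with $r<1<R$; I keep the same names for these extensions.

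Next I would integrate. Because $h_z$ is holomorphic and $h_{\bar z}$ antiholomorphic on $A(r,R)$, the $1$-form $h_z\,dz+h_{\bar z}\,d\bar z$ is closed; its only period is $\oint_{S^1}(h_z\,dz+h_{\bar z}\,d\bar z)=\int_0^{2\pi}h_\theta\,d\theta=0$ since $\gamma_1+i\gamma_2$ is $2\pi$-periodic, so the form is exact and its primitive is a single-valued $h$ on $A(r,R)$, unique up to an additive constant, which I fix by $h(1)=(\gamma_1+i\gamma_2)(1)$. Applying the same reasoning to the real $1$-form $2\,\mathrm{Re}(w_z\,dz)$, whose $S^1$-period is $2\,\mathrm{Re}\bigl(\tfrac{i}{2}\int_0^{2\pi}w_\rho\,d\theta\bigr)=0$ because that integral is purely imaginary, produces a single-valued real $w$ on $A(r,R)$ with $w(1)=\gamma_3(1)$. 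By construction $h_{z\bar z}=w_{z\bar z}=0$, and computing $\partial_\rho$ and $\partial_\theta$ of $F:=(h,w)$ on $\{|z|=1\}$ recovers $L$ and $\gamma'$; together with the normalisations at $z=1$ this gives $F(e^{i\theta})=\gamma(e^{i\theta})$ and $\partial F/\partial\rho|_{e^{i\theta}}=L(e^{i\theta})$.

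It remains to see that $F$ is a generalised maximal surface with $\{|z|=1\}$ in its singular set, and this is where the hypotheses on $\gamma$ and $L$ enter. On $S^1$ the conditions ``$\gamma,L$ null'' and $\langle\gamma',L\rangle=0$ say exactly $|h_\theta|^2=w_\theta^2$, $|h_\rho|^2=w_\rho^2$ and $\mathrm{Re}(h_\theta\overline{h_\rho})=w_\theta w_\rho$; hence $|h_\theta\overline{h_\rho}|^2=|h_\theta|^2|h_\rho|^2=(\mathrm{Re}\,h_\theta\overline{h_\rho})^2$, so $h_\theta\overline{h_\rho}$ is real. A short computation with \eqref{hzandhzbar} then gives, on $\{|z|=1\}$, simultaneously $|h_z|=|h_{\bar z}|$ (so $S^1$ lies in the singular set), $h_z\overline{h_{\bar z}}=w_z^2$, and $h_z=(h_\rho/\overline{h_\rho})\,\overline{h_{\bar z}}=g^2\,\overline{h_{\bar z}}$, where $g$ is the boundary function of \eqref{expressionfor_g_whengammaorLvanish} (using $g_1=g_2$ when both are defined, and continuity/analyticity near the points where $\gamma'$ or $L$ vanishes). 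Since $h_z\overline{h_{\bar z}}$, $w_z^2$, $h_z$ and $g^2\overline{h_{\bar z}}$ are all holomorphic on $A(r,R)$, the identity theorem promotes the last two boundary identities to $h_z\overline{h_{\bar z}}\equiv w_z^2$ and $h_z\equiv g^2\,\overline{h_{\bar z}}$ throughout $A(r,R)$. The first is the conformality relation; the second gives $|h_z|=|g|^2|h_{\bar z}|$ on $A(r,R)$, so the hypothesis $|g|\not\equiv 1$ forces $|h_z|\not\equiv|h_{\bar z}|$, which is the missing nondegeneracy condition. Thus $(h,w)$ is a generalised maximal surface with the required $1$-jet and singular set. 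Uniqueness follows by the same mechanism: for any competing solution the extensions of $h_z,h_{\bar z},w_z$ are forced by the prescribed $1$-jet and the identity theorem, and the values at $z=1$ fix the constants, so any two solutions agree on a neighbourhood of $S^1$ and define the same germ (which is the most one can ask, since the annulus may be shrunk and the conformal parameter changed).

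I would expect the main obstacle to be not any single estimate but the careful handling of the degenerate loci — the points where $\gamma'$, $L$, or the denominators $L_1-iL_2$, $\gamma_1'-i\gamma_2'$ vanish, and the two cases where one of $\gamma',L$ is identically zero — since it is there that the well-definedness of $g$ and the identity $h_z/\overline{h_{\bar z}}=g^2$ have to be justified by analyticity rather than by the pointwise algebra above. Everything else reduces to the period computation and two applications of the identity theorem.
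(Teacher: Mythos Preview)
Your argument is correct and follows the same overall architecture as the paper's proof --- construct harmonic $h$ and $w$ from the $1$-jet, verify conformality on $S^1$ and extend by the identity theorem, deduce nondegeneracy from the hypothesis on $|g|$, and check $|h_z|=|h_{\bar z}|$ on $S^1$ --- but your construction of $h$ and $w$ is organised differently. The paper writes down the general harmonic function on an annulus, $h(z)=\sum a_nz^n+b_n\bar z^{-n}+c\ln|z|^2$, reads off $h_\theta$ and $h_\rho$ on $S^1$ as Fourier series, and solves for $a_n,b_n,c$ by matching coefficients against $\gamma'$ and $L$. You instead prescribe $h_z,h_{\bar z},w_z$ on $S^1$, extend holomorphically/antiholomorphically, and integrate after checking periods. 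Both rely on exactly the same convergence fact (exponential decay of Fourier coefficients of a real-analytic function) and yield the same $F$. Your route is closer to the classical Bj\"orling recipe and arguably cleaner for uniqueness; the paper's explicit Laurent expansion has the advantage that the coefficients $a_n,b_n,c$ reappear verbatim in the interpolation theorem of Section~4, so the computation there is already set up. For steps (2)--(4) the two proofs are essentially the same: the paper verifies $h_z\overline{h_{\bar z}}=w_z^2$ and $|h_z|=|h_{\bar z}|$ on $S^1$ by direct coordinate calculation using the null and orthogonality conditions, which amounts to your observation that $h_\theta\overline{h_\rho}$ is real; and where you explicitly derive $h_z=g^2\overline{h_{\bar z}}$ to get $|h_z|=|g|^2|h_{\bar z}|$, the paper simply invokes the identification of the Gauss map $\nu$ with the extension of $g$ set up just before the theorem. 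Your caveat about the zero loci of $\gamma'$, $L$, and the denominators in $g_1,g_2$ is exactly the right one and is handled (as you say) by analyticity rather than pointwise algebra.
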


\begin{proof}
We will prove this theorem in two steps
\begin{enumerate}
\item We show the existence of generalised maximal surface containing the given singular B\j\"orling data.
\item Next we show that the determined generalized maximal surface will have singularity set atleast $ \{|z|=1\}. $
\end{enumerate}
In the step $1$, we find a complex harmonic function $h$ and a real harmonic function $ w $ defined on some annulus $ A(r,R), $ and show that $h_z\overline{h_{\bar{z}}}-w_z^2\equiv 0$. Any harmonic function over some annulus $A(r,R)$ has the following form
\begin{equation}\label{harmonic_exprsn_of_h(z)}
h(z)= \sum_{-\infty}^\infty a_n z^n+\frac{b_n}{\bar{z}^{n}}+ c \ln |z|^2.
\end{equation}

Therefore, in $(\rho, \theta)$ coordinates, on the unit circle,
\begin{equation}\label{htheta} h_\theta(e^{i\theta})= i\sum_{-\infty}^\infty n (a_n+b_n)e^{in\theta} \end{equation}
\begin{equation}\label{hrho} h_\rho(e^{i\theta})= \sum_{-\infty}^\infty n (a_n-b_n)e^{in\theta} +c\end{equation}

From the given data, $h(e^{i\theta})= \gamma_1(\theta)+ i\gamma_2(\theta),$  we know  the left hand side of the equation \eqref{htheta} and as $\gamma$ is analytic, $h_\theta(e^{i\theta})$ is analytic so the series (in equation \eqref{htheta}) in the right hand side converges.

Next we equate \begin{equation}\label{hr} h_\rho(e^{i\theta})= L_1(\theta)+ iL_2(\theta)\end{equation} as above, as $L_1+iL_2$ is  analytic,
$ h_{\rho} $ is analytic and hence the series in equation \eqref{hrho} converges. We have $n(a_n+b_n) $ as the fourier coefficients of $h_\theta$ in equation \eqref{htheta} for all $n$, and those for $ h_{\rho} $ are $ n(a_n-b_n) $ in equation \eqref{hrho}, for all $ n $. Therefore we can solve for $a_n$, $b_n$ and $c$ uniquely and hence we have  determined $h(z)$ such that $h$ is harmonic.\\  %$h(e^{i\theta})= \gamma_1(\theta)+ i\gamma_2(\theta)$.
In the same way, the harmonic function $ w(z) $ can be determined, because we have given $ w(e^{i\theta}) $ and $ w_{\rho}(e^{i\theta}) $.

%Now we have given $w(e^\theta)$ and $w_\rho(e^{i\theta})$ which will uniquely determine harmonic $w$ in the annulus.

Now we will show $ h_z\overline{{h}_{\bar{z}}}-w_z^2=0 $ on unit circle with given data. Indeed,

\begin{equation}\label{hbarzbar}
\overline{h_{\bar{z}}}=\dfrac{1}{2}\left(\overline{h_{\rho}}-\dfrac{i}{\rho}\overline{h_{\theta}}\right)e^{{-i}{\theta}}
\end{equation}
\begin{equation}\label{wz}
 w_{z}=\dfrac{1}{2}\left(w_{\rho}-\dfrac{i}{\rho}w_{\theta}\right)e^{{-i}{\theta}}
\end{equation}
 On unit circle we have
 \begin{align*}
 h_{z}\overline{h_{\bar{z}}}&=\dfrac{1}{4}(h_{\rho}-ih_{\theta})(\overline{h_{\rho}}-i\overline{h_{\theta}})e^{-2i\theta}\\
                            &=\dfrac{1}{4}(L_1^2+L_2^2-{\gamma'}_1^2-{\gamma'}_2^2-i(L_1+iL_2)({\gamma'}_1-i{\gamma'}_2)-i({\gamma'}_1+i{\gamma'}_2)(L_1-iL_2))e^{-2i\theta}\\
 \end{align*}
 As $ L $ and $ {\gamma}' $ are null vector fields we have $ L_1^2+L_2^2=L_3^2 $ and $ {\gamma'}_1^2+{\gamma'}_2^2={\gamma'}_3^2 $, using these identities in above equation we get
 \begin{align}\label{barhz}
 h_{z}\overline{h_{\bar{z}}}&=\dfrac{1}{4}(L_3^2-{\gamma'}_3^2-2iL_3{\gamma'}_3)e^{-2i\theta}
 \end{align}
 Next
 \begin{align}\label{wz2}
 w_{z}^2=\dfrac{1}{4}(w_{\rho}-iw_{\theta})^2e^{-2i\theta}
        &=\dfrac{1}{4}(L_3^2-{\gamma'}_3^2-2iL_3{\gamma'}_3)e^{-2i\theta}
 \end{align}
 From equation \eqref{barhz} and equation \eqref{wz2} we see that $ h_z\overline{h_{\bar{z}}}-w_z^2=0 $ on the unit circle. As $ h $ and $ w $ are harmonic functions on an annulus $ A(r,R) $, the function $ h_z\overline{h_{\bar{z}}}-w_z^2  $ is complex analytic on $ A(r,R) $ which contains the unit circle and hence $ h_z\overline{h_{\bar{z}}}-w_z^2\equiv0  $ on annulus.\\ %Hence we have found a unique generalised maximal surface $ F:=(h,w) $.\\
 As we have given in hypothesis, that the analytic extension $ g(z) $ of $ g(e^{i\theta}) $ is such that $ |g(z)| $ is not identically $ 1 $, which is, equivalent to saying $ |h_z| $ is not identically equal to $ |h_{\bar{z}}| $. Hence we have found the unique generalised  maximal surface $ F:=(h,w) $.

 Now in this last step, we are going to show that the singular set of the generalised maximal surface $ F:=(h,w) $ contains atleast $\{|z|=1\} $.

Since $ L_3{\gamma'}_3=L_1{\gamma'}_1+L_2{\gamma'}_2 $ and $\gamma'$ and $L$ are null vector field, we have $ L_1{\gamma'}_2=L_2{\gamma'}_1 $.

By the expression \eqref{hzandhzbar}, on the unit circle we have
 \begin{align*}
  |h_z|=\dfrac{1}{2}|h_{\rho}-ih_{\theta}|
               =|L_{1}+{\gamma'}_{2}+i(L_{2}-{\gamma'}_{1})|\end{align*}
  \begin{align}\label{R}
  |h_z|^2=\dfrac{L_1^2+L_2^2+{\gamma'}_1^2+{\gamma'}_2^2}{4}+\dfrac{L_1{\gamma'}_2-L_2{\gamma'}_1}{2}
  \end{align}
  Similarly,
  \begin{align}\label{S}
   |h_{\bar{z}}|^2=\dfrac{L_1^2+L_2^2+{\gamma'}_1^2+{\gamma'}_2^2}{4}-\dfrac{L_1{\gamma'}_2-L_2{\gamma'}_1}{2}
  \end{align}
  Now subtracting equation \eqref{S} from \eqref{R} will give
  \begin{align*}
  |h_z|^2-|h_{\bar{z}}|^2=L_1{\gamma'}_2-L_2{\gamma'}_1=0.
  \end{align*}
  Thus $ |h_z|=|h_{\bar{z}}| $ on unit circle, this proves that our unique generalised maximal surface $ F:=(h,w) $ will have singularity set atleast $ \{|z|=1\} .$
 \end{proof}
 \begin{example}\normalfont If $ \gamma(\theta)=(c,c,c) $, a  constant curve then for any non vanishing null vector field $ L(\theta) $ there exists a generalised maximal surface containing the constant curve as singularity.  We give an example illustrating this and the proof of the above theorem.  When $ L(\theta)=(e^{i\theta},1)=(h_{\rho},w_{\rho}) $ and $ {\gamma'}(\theta)=(0+i0,0)=(h_{\theta},w_{\theta}) $, we will get a generalised maximal surface known as elliptic catenoid.
 Recall the expressions \eqref{htheta} and \eqref{hrho}, from these we have
 \begin{align*}
 0= i\sum_{-\infty}^\infty n (a_n+b_n)e^{in\theta} \text{ and }
 \end{align*}
 \begin{align*}
 e^{i\theta}= \sum_{-\infty}^\infty n (a_n-b_n)e^{in\theta} +c
 \end{align*}
 This gives $ a_1-b_1=1$ and  $a_1+b_1=0$ which
 imply $ a_n=0 $, $ b_n=0,\;\forall\; n\neq 1$ and $ c=0 $ and hence from the formula \eqref{harmonic_exprsn_of_h(z)} we get $
 h(z)=\dfrac{1}{2}\left(z-\dfrac{1}{\bar{z}}\right)$.
 To obtain $ w(z)$, we repeat the same step as in the case of obtaining $ h(z) $, because here we know $ w_{\rho}=1 $ and $ w_{\theta}=0 $, from this we get $c=1$ and $ a_n=b_n=0, \;\forall\; n $. This gives $
 w(z)=\dfrac{1}{2}log(z\bar{z}).$
 Expressions $(h,w)$ together represents an elliptic catenoid.
 \end{example}

\section{Existence of maximal surface containing a presecribed curve and special singularity}

We start with an example to explain the problem and possible solution. Let $ \tilde{\gamma}(\theta)=(c_1e^{i\theta},c_2), $ $c_1$ and $c_2$ be some constants. We will see that if we take $c_1=c_2=1$, i.e. $ \tilde{\gamma}(\theta)=(e^{i\theta},1), $ then  there does not exists any positive real $r_0\neq 1$ and generalised maximal surface $F$  as in the Definition \eqref{Def:GeneralisedMaxSurface}, defined on some annulus having $|z|=1$  such that  $ F(r_0e^{i\theta})=(e^{i\theta}, 1) $  and $ F $ restricted to unit circle has a special singularity.

While, in particular, if we take $c_1= -\frac{3}{4}$ and $c_2= \ln \frac{1}{2}$, then for $r_0=\frac{1}{2}$, there is a generalised maximal surface $F: \mathbb C-\{0\}\to \mathbb L^3$ such that $F(r_0e^{i\theta})= \gamma(r_0e^{i\theta}):= \tilde{\gamma}(\theta)$, maximal surface is the elliptic catenoid discussed in the example \eqref{elliptic_catenoid_1}.

Now below we will verify the above facts in detail. We are looking for the generalised maximal surface $F$ such that
 \begin{align}\label{initialdata}
 F(r_0{e^{i\theta}})=(c_1{e^{i\theta}}, c_2); r_{0}\neq 1, c_1, c_2 ~~~\text{are constants} ~~~\text{and}~~~ F(e^{i\theta})=(0,0,0).
 \end{align}
 Also on $|z|=1$, $F$ admits singularity.

 Suppose if we can find such a maximal surface $ F(z)=(h(z),w(z)) $ which satisfy the initial data  given in \eqref{initialdata}, then $h$ for the maximal surface  over an annulus is of the form as in equation \eqref{harmonic_exprsn_of_h(z)}, and similarly for $w$. The initial condition $F(e^{i\theta})= (0+0i, 0)$ will give us
 \begin{align}\label{an}
 a_{n}+b_{n}=0;\;\; \forall\; n
\end{align}
 and the condition  $ F(r_0{e^{i\theta}})=(c_1{e^{i\theta}}, c_2)$
 \begin{align}\label{eq1}
 a_{n}r_{0}^n+\dfrac{b_{n}}{r_{0}}=0 \Rightarrow  a_{n}=b_{n}=0;\; \forall\; n \neq1,0.
 \end{align}
  \begin{align}\label{eq2}
  a_{0}+b_{0}+c\log r_{0}=0 \Rightarrow c=0 ~~~\text{if}~~~ r_0\neq 1.
  \end{align}
We use \eqref{an}, \eqref{eq1} and \eqref{eq2} to get
  \begin{align}
  a_1=\dfrac{r_0}{r_0^2-1}c_1 ~~~\text{and}~~~ b_1=-a_1,
  \end{align}
  hence
  \begin{align}\label{hz}
  h(z)=\dfrac{r_0c_1}{r_0^2-1}\left(z-\dfrac{1}{\bar{z}}\right)
  \end{align}
  Similarly for $ w(z) $ using initial conditions \eqref{initialdata} we get
  \begin{align}
  c_n+d_n=0;\;\; \forall\; n
  \end{align}
  \begin{align}
 c_{n}r_{0}^n+\dfrac{d_{n}}{r_{0}^n}=0 \Rightarrow  c_{n}=d_{n}=0;\;\; \forall n \neq1,0.
 \end{align}
  \begin{align}
  c_{0}+d_{0}+d\log r_{0}=c_2 \Rightarrow d=\dfrac{c_2}{\log r_0} ~~~\text{if}~~~ r_0\neq 1.
  \end{align}

  \begin{align}\label{eq4}
  w(z)=\left(\dfrac{c_2}{2\log r_0}\right)\log z\bar{z}
  \end{align}
  Now in order to have $ F(z)=(h(z),w(z)) $ as the generalised  maximal surface, $ h $ and $ w $ have to satisfy the conditions given in Definition \eqref{Def:GeneralisedMaxSurface}. The relation $h_z\overline{h_{\bar{z}}}-w_z^2\equiv 0$ gives us a relation between $ c_1, c_2 $ and $ r_0 $ as follows
  \begin{align}\label{constantintheexample}
  \dfrac{c_1r_0}{r_0^2-1}=\dfrac{c_2}{2\log r_0}
  \end{align}
and we see for any set of constants $c_1, c_2, r_0$, satisfies above relation, $|h_z|$ is not identically same as $|h_{\overline{z}}|$. Therefore if we have constants $(c_1, c_2, r_0\neq 1)$ such that they satisfies \eqref{constantintheexample}, then there is a generalised maximal surface satisfying initial data \eqref{initialdata} and having singularity on $|z|=1$.\\
 Moreover, we see that, for the spacelike closed curve $\tilde{\gamma}(\theta)=(e^{i\theta}, 1) $, $ c_1=c_2=1 $, mentioned in the beginning of this section, the equation \eqref{constantintheexample} has no solution for any $r_0$. Therefore, there does not exists any generalised maximal surface $F$ such that
 \begin{align}
 F(r_0{e^{i\theta}})=(e^{i\theta}, 1); r_{0}\neq 1 ~~~\text{,}~~~ F(e^{i\theta})=(0,0,0)
 \end{align}
 and $ F $ restricted to unit circle has a special singularity.

%We observe the following.
%\begin{enumerate}
%\item There are many combination of $c_1, c_2, r_0$ such that we have exsistence of the generalised maximal surface with prescribed condition as \eqref{initialdata}.
%\item In above example we see that the maximal surface which we get, metric does not vanish identically to any circle $|z|=r$ for $r\neq 1$.  In the follow discussion we will see that this is true in general.
%\end{enumerate}
 In general we can ask the following:
Given a real analytic curve $\tilde{\gamma}(\theta)$, does there exists $F: A(r,R) \to \mathbb L^3$, a generalised maximal surface and $r_0\neq 1$ such that $F(r_0e^{i\theta})=\tilde{\gamma}(\theta)$ and $F$ has a special singularity at $|z|=1$.

For a curve $\tilde{\gamma}(\theta)=\gamma(r_0e^{i\theta})= (f(r_0e^{i\theta}), g(r_0e^{i\theta}))$, $r_0\neq 1$  $\left(\text{where }f(r_0e^{i\theta})\in \mathbb C, g(r_0e^{i\theta})\in \mathbb R\right)$, we define the following modified Fourier coefficients  of $f$ and $g$ as
\begin{align}\label{c_nd_d}
c=\dfrac{1}{2\pi\log r_0}\int_{-\pi}^{\pi}f(r_0e^{i\theta})d\theta;\;\;\;
d=\dfrac{1}{2\pi\log r_0}\int_{-\pi}^{\pi}g(r_0e^{i\theta})d\theta,
\end{align}
$\text{for}\;\; n\neq 0;$
\begin{align}\label{cn_and_dn}
c_n=\dfrac{r_0^n}{2\pi(r_0^{2n}-1)}\int_{-\pi}^{\pi}f(r_0e^{i\theta})e^{-in\theta}d\theta;\;\;d_n=\dfrac{r_0^n}{2\pi(r_0^{2n}-1)}\int_{-\pi}^{\pi}g(r_0e^{i\theta})e^{-in\theta}d\theta.
\end{align}

We see that if $\tilde{\gamma}$ is real analytic (since $c_n, d_n, c_{-n},$ and $d_{-n}$ all converges to 0),  $\limsup |c_{-n}|^{\frac{1}{n}}=0,$ $\limsup|c_n|^{\frac{1}{n}}=0,$ $\limsup |d_{-n}|^{\frac{1}{n}}=0$ and $\limsup |d_{n}|^{\frac{1}{n}}=0$, therefore the following two series convreges for all $|z|\neq 0$,
\begin{align}\label{h_zintermsofc_n}
h(z)=\sum_{-\infty}^{\infty}c_n\left(z^n-\dfrac{1}{\bar{z}^n}\right)+ c\log |z|.
\end{align}

\begin{align}\label{w_zintermofd_n}
w(z)=\sum_{-\infty}^{\infty}d_n\left(z^n-\dfrac{1}{\bar{z}^n}\right)+ d\log |z|,
\end{align}

Now we state the following theorem  which is an application to the theorem \eqref{singularBjorlingProblemThm}.
\begin{theorem}\label{theoreminterpolation}
Let $\tilde{\gamma}(\theta)$ be a nonconstant closed real analytic spacelike curve. Then there exists $s_0\neq 1$ and  a generalised maximal surface $F: \mathbb C-\{0\}\to \mathbb L^3$ such that $F(s_0e^{i\theta}):=\tilde{\gamma}(\theta)$ and having a special singularity at $(0,0,0)\in\mathbb L^3$ if and only if   there exists $r_0\neq 1$ and constants $c, c_n's, d, d_n's$ for the curve $\gamma(r_0e^{i\theta}):=\tilde{\gamma}(\theta)=(f(r_0e^{i\theta}),g(r_0e^{i\theta}))$, as in equations  \eqref{c_nd_d},\eqref{cn_and_dn} which satisfy the relations:
\begin{eqnarray}\label{series condition1}
&&\forall\; k\neq 0;\;\;\sum_{-\infty}^\infty 4n(n-k)(c_n\bar{c}_{n-k}-d_nd_{n-k})+ 2k(c_k\bar{c}-c\bar{c}_{-k}-2d_kd)=0\\
\label{series condition2}&&\text { and } \;\sum 4n^2(c_n\bar{c}_n-d_n^2)+ c\bar{c}-d^2=0.
\end{eqnarray}

\end{theorem}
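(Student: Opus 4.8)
The plan is to reduce the existence statement to the conformality relation $h_z\overline{h_{\bar z}}-w_z^2\equiv 0$ for the explicit harmonic maps $h$ and $w$ attached to $\tilde\gamma$, and then to show that these two conditions are exactly the vanishing of all Laurent coefficients of the analytic function $h_z\overline{h_{\bar z}}-w_z^2$. First I would observe that, given $\tilde\gamma$ nonconstant real analytic, the modified Fourier coefficients $c,c_n,d,d_n$ in \eqref{c_nd_d}--\eqref{cn_and_dn} are precisely chosen so that the harmonic functions $h$ and $w$ of \eqref{h_zintermsofc_n}--\eqref{w_zintermofd_n} satisfy $h(r_0 e^{i\theta})=f(r_0e^{i\theta})$, $w(r_0e^{i\theta})=g(r_0e^{i\theta})$, and $h(e^{i\theta})=0$, $w(e^{i\theta})=0$: indeed $z^n-\bar z^{-n}$ vanishes on $|z|=1$, while the two integrals recover the $n$-th Fourier coefficient of $f(r_0e^{i\theta})$ after dividing by $r_0^n-r_0^{-n}$ (and the logarithmic term handles the $n=0$ mode, using $r_0\neq 1$). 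So the candidate surface is forced, and the only thing left to check is whether $F=(h,w)$ is a generalised maximal surface, i.e.\ whether $h_z\overline{h_{\bar z}}-w_z^2\equiv 0$ on $\mathbb C\setminus\{0\}$ while $|h_z|\not\equiv|h_{\bar z}|$.

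For the forward direction, if such an $F$ exists on $\mathbb C\setminus\{0\}$ with the stated boundary values and a special singularity on $|z|=1$, then $h,w$ are harmonic on the punctured plane, hence of the Laurent form \eqref{harmonic_exprsn_of_h(z)}; the two interpolation conditions pin the coefficients down to \eqref{h_zintermsofc_n}--\eqref{w_zintermofd_n} with $s_0=r_0$ (here I would invoke the same elimination argument already carried out in the worked example, equations \eqref{an}--\eqref{eq2}, now with general Fourier data instead of a single mode). Being a maximal surface forces $h_z\overline{h_{\bar z}}-w_z^2\equiv 0$; expanding $h_z$, $h_{\bar z}$, $w_z$ from \eqref{h_zintermsofc_n}--\eqref{w_zintermofd_n} as Laurent series, multiplying out, and collecting the coefficient of $z^{-k-1}\bar z^{-1}$ (equivalently $z^{k-1}$ after using the analyticity of the product) gives exactly \eqref{series condition1} for $k\neq 0$ and \eqref{series condition2} for the $k=0$ mode. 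The special singularity at $|z|=1$ is automatic once $F$ is a maximal surface with $F(e^{i\theta})\equiv(0,0,0)$: a constant image on a circle forces $\eta=(|h_z|-|h_{\bar z}|)^2$ to vanish there, i.e.\ $|z|=1$ lies in the singular set, as in the Special singularity definition and Theorem \ref{singularBjorlingProblemThm}.

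For the converse, assume \eqref{series condition1}--\eqref{series condition2} hold. Define $h,w$ by \eqref{h_zintermsofc_n}--\eqref{w_zintermofd_n}; real analyticity of $\tilde\gamma$ gives $\limsup|c_n|^{1/n}=\limsup|c_{-n}|^{1/n}=0$ etc., so both series converge on all of $\mathbb C\setminus\{0\}$ and define harmonic maps there, and by construction $F(r_0e^{i\theta})=\tilde\gamma(\theta)$, $F(e^{i\theta})=(0,0,0)$. The function $\Phi:=h_z\overline{h_{\bar z}}-w_z^2$ is holomorphic on $\mathbb C\setminus\{0\}$; computing its Laurent coefficients as above, conditions \eqref{series condition1}--\eqref{series condition2} say precisely that every Laurent coefficient of $\Phi$ vanishes, so $\Phi\equiv 0$, which is the conformality relation. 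Finally $|h_z|\not\equiv|h_{\bar z}|$ because $\tilde\gamma$ is nonconstant: if $|h_z|\equiv|h_{\bar z}|$ on an open set then by the argument in \S2 (the $\eta=0$ computation) $F$ would be constant on a neighbourhood of $|z|=r_0$, contradicting that $F$ takes the nonconstant value $\tilde\gamma(\theta)$ on that circle; hence $F$ is a genuine generalised maximal surface, and as above it has a special singularity at $(0,0,0)$ along $|z|=1$. Set $s_0=r_0$.

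The main obstacle is the bookkeeping in the middle step: expanding $h_z\overline{h_{\bar z}}-w_z^2$ in terms of $c_n,d_n$ and correctly identifying the coefficient of each power of $z$ with the left-hand sides of \eqref{series condition1} and \eqref{series condition2} — in particular keeping track of the cross terms coming from the $c\log|z|$ and $d\log|z|$ pieces (whose $z$-derivatives are $c/2z$, $d/2z$), which are responsible for the $2k(c_k\bar c-c\bar c_{-k}-2d_kd)$ term and the $c\bar c-d^2$ term. Establishing that these are the only contributions, and that $\Phi$ holomorphic plus all Laurent coefficients zero really does capture both the necessity and the sufficiency, is where care is needed; the convergence and harmonicity claims are routine given the $\limsup$ estimates already recorded before the theorem.
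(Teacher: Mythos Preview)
Your argument is correct, but it takes a somewhat different route from the paper's. You proceed directly: define $h,w$ by \eqref{h_zintermsofc_n}--\eqref{w_zintermofd_n}, observe that $\Phi:=h_z\overline{h_{\bar z}}-w_z^2$ is holomorphic on $\mathbb C\setminus\{0\}$, and identify the series conditions \eqref{series condition1}--\eqref{series condition2} with the vanishing of every Laurent coefficient of $\Phi$. The paper instead restricts to $|z|=1$, where $h_\theta=w_\theta=0$ because $F(e^{i\theta})\equiv(0,0,0)$, expands $h_\rho\bar h_\rho-w_\rho^2$ as a Fourier series in $e^{ik\theta}$, and reads off \eqref{series condition1}--\eqref{series condition2} as precisely the statement that the radial derivative $(h_\rho,w_\rho)$ is a \emph{null} vector field along the unit circle. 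It then invokes Theorem~\ref{singularBjorlingProblemThm} with singular Bj\"orling data $\alpha\equiv(0,0,0)$, $L=(h_\rho,w_\rho)$: the resulting maximal surface must agree with $(h,w)$ on an annulus by uniqueness, forcing $\Phi\equiv 0$ there and hence (by analyticity) on all of $\mathbb C\setminus\{0\}$. The two computations are equivalent via the identity $\Phi|_{|z|=1}=\tfrac{1}{4}e^{-2i\theta}(h_\rho\bar h_\rho-w_\rho^2)$ on the unit circle, so your Laurent coefficients are (up to a shift and a factor of $4$) the paper's Fourier coefficients. Your approach is more self-contained since it bypasses Theorem~\ref{singularBjorlingProblemThm}; the paper's approach makes explicit why the result is framed as an \emph{application} of the singular Bj\"orling problem and isolates the geometric content (nullity of $L$ along the singular circle). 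For the nondegeneracy $|h_z|\not\equiv|h_{\bar z}|$, the paper uses directly that $\tilde\gamma$ is spacelike, so $\eta>0$ along $|z|=r_0$; your argument via nonconstancy of $\tilde\gamma$ and the $F_x=F_y=0$ computation from \S2 is also valid.
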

\begin{proof} We start proving only if part. Assume that the constants $c, c_n's, d, d_n's$ satisfies the conditions \eqref{series condition1} and \eqref{series condition2} for the curve $\gamma(r_0e^{i\theta})$. We claim that $h$ and $w$ given by equation \eqref{h_zintermsofc_n}, \eqref{w_zintermofd_n}  is the generalised maximal surface satisfying given data. We see that $h(|z|=1)=0;\;\;w(|z|=1)=0$ and $\gamma(r_0e^{i\theta})=(h(r_0e^{i\theta}), w(r_0e^{i\theta}))= (f(r_0e^{i\theta}), g(r_0e^{i\theta})).$
From equations \eqref{h_zintermsofc_n} and \eqref{w_zintermofd_n}, we have
$$h_\rho(e^{i\theta})= \sum_{-\infty}^\infty 2nc_ne^{in\theta} +c;\text{ and }$$
$$w_\rho(e^{i\theta})= \sum_{-\infty}^\infty 2nd_ne^{in\theta} +d$$

\begin{eqnarray*}h_\rho(e^{i\theta}).\bar{h}_\rho(e^{i\theta})&=& \left(\sum_{-\infty}^\infty2nc_ne^{in\theta}+c\right)\left(\sum_{-\infty}^\infty2n\bar{c}_ne^{-in\theta}+\bar{c}\right)\\
&=&\sum_{k=-\infty}^\infty\left(\sum_{n=-\infty}^\infty 4n(n-k)c_n\bar{c}_{n-k}+2k(c_k\bar{c}-c\bar{c}_{-k})\right)e^{ik\theta}+c\bar{c}
\end{eqnarray*}
Similarly we have
$$w_\rho^2(e^{i\theta})= \sum_{k=-\infty}^\infty\left(\sum_{n=-\infty}^\infty 4n(n-k)d_nd_{n-k}+4kd_kd\right)e^{ik\theta}+d^2$$

All the series above converges absolutely as $f$ and $g$ are real analytic functions. The series conditions on the constants given in the theorem assures that $h_\rho\overline{h_\rho}-w_\rho^2=0$ for $z= e^{i\theta}$, that is to say that $(h_\rho, w_\rho)$ is a null vector field along $|z|=1$.

By the singular Bj\"orling problem for closed curve $\alpha(e^{i\theta})= (0,0,0)$ and $L(e^{i\theta})= (h_\rho(e^{\theta}), w_\rho(e^{\theta}))$ we have a unique maximal surface $(h', w')$ on some $A(r,R)$, $r<1<R$ by theorem \eqref{singularBjorlingProblemThm}. On $A(r, R)$ we have (by uniqueness of $(h,w)$ and $(h', w')$ on $A(r,R)$),
$$h_z\overline{h_{\bar{z}}}-w_z^2= h'_z\overline{h'_{\bar{z}}}-{w'_z}^2\equiv 0$$
But this being a complex analytic function on $\mathbb C-\{0\}$, $h_z\overline{h_{\bar{z}}}-w_z^2\equiv 0$.   Also as $F(r_0e^{i\theta})$ is a spacelike curve, this gives that $|h_z|$ is not identically equal to $|h_{\bar{z}}|$, and it proves existence of the required generalised maximal surface. Here we can take $ s_0=r_0 $.

Now other way, if the generalized maximal surface $F$ is given such that $F(s_0e^{i\theta})= \gamma(\theta)$ and $F(|z|=1)= (0,0,0)$, then $F$ has to be of the form $(h,w)$ as given in the equations \eqref{h_zintermsofc_n} and \eqref{w_zintermofd_n} with $c, c_n's, d, d_n's$ as in \eqref{c_nd_d} and \eqref{cn_and_dn} with $r_0= s_0$. In this form prescribing singularity set as $|z|=1$ is same as asking for the vector $(h_\rho(e^{i\theta}), w_\rho(e^{i\theta}))$ is a null vector which gives series conditions as in \eqref{series condition1} and \eqref{series condition2}.
\end{proof}

For a given spacelike closed curve, $r_0(\neq 1)$ may not exist as in the above theorem unless it satisfies those series conditions and if such an $ r_0 $ exists, it need not be unique. For instance, we have seen that for the curve $\tilde{\gamma}(\theta)= (e^{i\theta},1)$, there does not exists a generalised maximal surface and  $r_0\neq 1$ such that $F(r_0e^{i\theta})= \tilde{\gamma}(\theta); F(e^{i\theta})= (0,0,0)$ with singular set atleast $|z|=1$. 
But if we do some small perturbations of this curve $\tilde{\gamma}$, i.e., for $\epsilon>0$,  let $F(r_0^{\epsilon}e^{i\theta})=\tilde{\gamma}_{\epsilon}(\theta)= ((1-\epsilon)e^{i\theta},1)$, compare this with (\ref{initialdata}), then $ \frac{c_1}{c_2}=1-\epsilon $, and from the equation (\ref{constantintheexample}) we see that there are two choice of $r_0^{\epsilon}$ for fixed $\epsilon$. Also, we can see that as $\epsilon\to 0$,$\tilde{\gamma}_{\epsilon}\to \tilde{\gamma} $ and $r_0^{\epsilon}\to 1$.

In the above theorem, fixing the special singularity at $(0,0,0)$ and asking for the existence of a generalised maximal surface is not necessary, we may ask for any point $(x_1, x_2, x_3)\in \mathbb L^3$ as the special singularity corresponding to $|z|=1$.  But then accordingly the expression of $h$ and $w$ as in \eqref{h_zintermsofc_n} and \eqref{w_zintermofd_n} will change and the new series conditions (for e.g. \eqref{series condition1} and \eqref{series condition2}) will be found by posing condition that new $(h_\rho, w_\rho)$ is null vector along $|z|=1$.   We believe it is not the statement but the proof of the theorem that gives a handy way to check existence of the generalised maximal surface for a given closed spacelike curve.

\begin{example}\normalfont  We have seen that for $\tilde{\gamma}(\theta)= \left(-\frac{3}{4}e^{i\theta}, \ln\frac{1}{2}\right),$ if we take $r_0=\frac{1}{2}$ then the constants as in equations \eqref{c_nd_d}, \eqref{cn_and_dn} are as follows
$c_1=\frac{1}{2}\;, d=1$ and for all $n\neq 1$, $c_n's=0,\; d_n=0$ and $c=0, d_1=0$ and these constants satisfies the series conditions as in equations \eqref{series condition1} and \eqref{series condition2}.  Therefore there exists a generalised maximal surface which is  given by expression of $h$ and $w$ as in example \eqref{elliptic_catenoid_1} having special singularity.
\end{example}

\begin{example}\normalfont
Consider the curve
\begin{align}
\tilde{\gamma}(\theta)=(a_3e^{3i\theta}+a_1e^{-i\theta}, b_2e^{2i\theta}+b_2e^{-2i\theta}).
\end{align}
Below we will analyze for given constants $a_1, a_3, $ and $b_2$, does there exists $r_0\neq 1$ and the generalised maximal surface as in theorem above (\eqref{theoreminterpolation}).

Recall the formulas \eqref{c_nd_d} and \eqref{cn_and_dn}, for $ f(r_0e^{i\theta})=a_3e^{3i\theta}+a_1e^{-i\theta} $and $g(r_0e^{i\theta})=b_2e^{2i\theta}+b_2e^{-2i\theta}  $, then we have $ c_0=0 , c=0$, for  $n\neq -1, 3$; $  c_n=0 $ and
\begin{align}\label{c's_intheexample}
c_{-1}=\dfrac{r_0^{-1}}{r_0^{-2}-1}a_1 ~~~\text{,}~~~ c_3=\dfrac{r_0^{3}}{r_0^{6}-1}a_3.
\end{align}
Similarly $ d=0 $, for $ n\neq -2, 2;$ $ d_n=0 $ and
\begin{align}\label{d's_intheexample}
d_2=\dfrac{r_0^{2}}{r_0^{4}-1}b_2 ~~~\text{,}~~~ d_{-2}=\dfrac{r_0^{-2}}{r_0^{-4}-1}b_2.
\end{align}
%Next we demand that $ \forall k\neq 0 $, $$\sum_{n=-\infty}^\infty 4n(n-k)(c_n\bar{c}_{n-k}-d_nd_{n-k})=0$$ this imply
%$$ \sum_{n=-2,-1,2,3} 4n(n-k)(c_n\bar{c}_{n-k}-d_nd_{n-k})=0. $$ Expanding this we get the relation
Suppose the constants $ a_1, a_3 $  and $ b_2 $ are such that the curve $ \tilde{\gamma} $ is spacelike, then there exists a generalised maximal surface $ F $ and $ r_0 $ as in  theorem \eqref{theoreminterpolation} if and only if the conditions \eqref{series condition1} and \eqref{series condition2} are satisfied by the constants $ c,c_n's,d,d_n's $. That is to say
 $$\forall\; k\neq 0\; \sum_{n=-2,-1,2,3} 4n(n-k)(c_nc_{n-k}-d_nd_{n-k})=0\;\; \text{ and }\sum_{n=2,-2,-1,3} 4n^2(c_n^2-d_n^2)=0$$ which is equivalent to
\begin{align}
4d_{2}d_{-2}-3c_{3}c_{-1}=0~~~\text{and}~~~c_{-1}^2+9c_3^2= 4(d_2^2+d_{-2}^2).
\end{align}
%now substituting the values of $ d_2,d_{-2},c_3,c_{-1} $ from \eqref{c's_intheexample} and \eqref{d's_intheexample}, we get
Therefore for $\gamma(r_0e^{i\theta}):=\tilde{\gamma}(\theta)=(a_3e^{3i\theta}+a_1e^{-i\theta}, b_2e^{2i\theta}+b_2e^{-2i\theta})$  if $a_1, a_3, b_2$  are such that $\tilde{\gamma}$ is spacelike then there exists a maximal surface
$F:\mathbb C-\{0\} \to \mathbb L^3$ such that $F(r_0e^{i\theta})= \tilde{\gamma}(\theta)$ and $F$ has special singularity at $|z|=1$ if and only if
\begin{eqnarray}
\label{exp_1}&&4\dfrac{b_2}{\left(r_0^2-\dfrac{1}{r_0^2}\right)}\dfrac{b_2}{\left(r_0^2-\dfrac{1}{r_0^2}\right)}=\dfrac{3a_3}{\left(r_0^3-\dfrac{1}{r_0^3}\right)}\dfrac{a_1}{\left(r_0-\dfrac{1}{r_0}\right)}\text{ and }\\
\label{exp_2}&&\dfrac{a_1^2}{\left(r_0-\dfrac{1}{r_0}\right)^2}+\dfrac{9a_3^2}{\left(r_0^3-\dfrac{1}{r_0^3}\right)^2}= 2\dfrac{4b_2^2}{\left(r_0^2-\dfrac{1}{r_0^2}\right)^2}.
\end{eqnarray}

In particular, for any given positive real $c\neq 1$, constants $ a_1, a_3 $ and $ b_2 $ as  $a_1= \frac{1}{2}\left(c-\dfrac{1}{c}\right)$,  $a_3= \frac{1}{6}\left(c^3-\dfrac{1}{c^3}\right)$ and $b_2=\frac{1}{4}\left(c^2-\dfrac{1}{c^2}\right)$ satisfy the equations \eqref{exp_1} and \eqref{exp_2} with $r_0=c$.

Also for any positive $c\neq 1$, the curve
$$\tilde{\gamma}(\theta)= \left(\frac{1}{2}\left(c-\dfrac{1}{c}\right)e^{-i\theta}+\frac{1}{6}\left(c^3-\dfrac{1}{c^3}\right)e^{3i\theta}, \frac{1}{2}\left(c^2-\dfrac{1}{c^2}\right)\cos 2\theta\right)$$
is spacelike. Therefore there is a generalised maximal surface as in theorem \eqref{theoreminterpolation} containing the curve $\tilde{\gamma}$ as above with special singularity at $|z|=1$.  The generalised maximal surface is given by $$h(z)= \frac{1}{6}\left(z^3-\frac{1}{\bar{z}^3}\right)+\frac{1}{2}\left(\bar{z}-\frac{1}{z}\right);\;\;w(z)= \frac{1}{4}\left(z^2-\frac{1}{\bar{z}^2}-\frac{1}{z^2}+\bar{z}^2\right).$$
\end{example}

\section{Conclusion}
We have seen  how complex representation of generalised maximal surface $$F(z)= \left(h(z),2Re\int_{z_0}^z\sqrt{h_z\overline{h}_{\overline{z}}}dz\right),$$
helps us to solve the singular Bj\"orling problem (theorem \eqref{singularBjorlingProblemThm}) and a particular type of interpolation problem (theorem \eqref{theoreminterpolation}).  This representation helps to see that  there does not exists any $r_0\neq 1$ such that $F(r_0e^{i\theta})= (e^{i\theta},1)$ with  special singularity on $|z|=1$,  but small perturbation of curve  $(e^{i\theta},1)$ gives the existence of required $r_0$ and $F$.

In general  there may exist a  maximal surface $G$ defined over a Riemann surface such that $G$ may have special singularity at some point and there may be curve on that Riemann surface whose image is arbitrary curve say for example $\tilde{\gamma}=(e^{i\theta}, 1)$.  This is a general problem which still needs to be explored for general curves.


\begin{thebibliography}{99}
\bibitem{mira}J. A. Aledo, J. A. G\'alvez, and P. Mira, \textit{Bj\"orling Representation for spacelike surfaces
with $H = cK$ in $\mathbb L^3$}, Proceedings of the II International Meeting on Lorentzian Geometry,
Publ. de la RSME 8 (2004), 2--7.
\bibitem{mira_2}L. J. Al\'{i}as, R. M. B. Chaves, and P. Mira, Bj\"{o}ring problem for maximal surfaces in
Lorentz-Minkowski space, Math. Proc. Cambridge Philos. Soc. 134 (2003), no. 2, 289--316.
\bibitem{romero}F. J. M. Estudillo and A. Romero, Generalized maximal surfaces in Lorentz-Minkowski
space $\mathbb L^3$, Math. Proc. Cambridge Philos. Soc. 111 (1992), no. 3, 515--524.
\bibitem{kim}Y. W. Kim and S.-D. Yang, Prescribing singularities of maximal surfaces via a singular
Bj\"{o}lring representation formula, J. Geom. Phys. 57 (2007), no. 11, 2167--2177.
\bibitem{kobayashi}O. Kobayashi, Maximal surfaces in the 3-dimensional Minkowski space $\mathbb L^3$, Tokyo J.
Math. 6 (1983), no. 2, 297--309.
\bibitem{neill}B. \'ONeill, Semi-Riemannian Geometry with Applications to Relativity, Academic Press,
New York, 1983.
\bibitem{lopez}I. Fern\'{a}ndez, F. L\'{o}pez, R. Souam, The space of complete embedded maximal surfaces with isolated singularities in the 3-dimensional Lorentz - Minkowski space $\mathbb L^3$, Math. Ann. 332 (3) (2005) 605--643.
\bibitem{iwaniec}Tadeusz Iwaniec, Leonid V. Kovalev, and Jani Onninen, Doubly connected minimal surfaces and extremal harmonic mappings, J. Geom Anal (2012), 22: 726--762.
\bibitem{fujimori} S. Fujimori, K. Saji, M. Umehara and K. Yamada, Singularities of maximal surfaces,
Math. Z., 259 (2008), 827--848.
\bibitem{kim_2}Y. W. Kim, S.-E. Koh, H. Shin and S.-D. Yang, Spacelike maximal surfaces, timelike
minimal surfaces, and Bj\"{o}rling representation formulae, Journal of Korean Math.
Soc. 48 (2011), 1083--1100.
\bibitem{kim_3}Y. W. Kim and S.-D. Yang, A family of maximal surfaces in Lorentz-Minkowski
three-space, Proc. Amer. Math. Soc. 134 (2006), 3379--3390.
\bibitem{umehara}M. Umehara and K. Yamada, Maximal surfaces with singularities in Minkowski space, Hokkaido Math. J., 35 (2006), 13--40.
\end{thebibliography}
\end{document}